\newcommand\cyr
\renewcommand\rmdefault{wncyr}
\renewcommand\sfdefault{wncyss}
\renewcommand\encodingdefault{OT2}
\DeclareTextFontCommand{\textcyr}{\cyr}
\newtheorem{theorem}{Theorem}[section]
\newtheorem{lemma}[theorem]{Lemma}
\newtheorem{corollary}[theorem]{Corollary}
\theoremstyle{definition}
\theoremstyle{remark}
\newtheorem{remark}[theorem]{Remark}
\numberwithin{equation}{section}
\begin{document}
\setcounter{page}{1}

\title[class number one conjecture]{Lambert $W$-function 
and Gauss class number one conjecture}

\author[Nikolaev]
{Igor V. Nikolaev$^1$}

\address{$^{1}$ Department of Mathematics and Computer Science, St.~John's University, 8000 Utopia Parkway,  
New York,  NY 11439, United States.}
\email{\textcolor[rgb]{0.00,0.00,0.84}{igor.v.nikolaev@gmail.com}}

\dedicatory{All data are available as part of the manuscript}

\subjclass[2020]{Primary 11M55; Secondary 46L85.}

\keywords{Drinfeld modules, class field theory, noncommutative tori.}


\begin{abstract}
We study fixed points of a function arising in a representation theory 
of the Drinfeld modules by the bounded linear operators on a Hilbert space. 
We prove that such points correspond to number fields of the class number one.
As an application, one gets a solution to  the Gauss conjecture 
for the real quadratic fields of class number one.

\end{abstract}

\maketitle

\section{Introduction}
Drinfeld modules are powerful invariants of the non-abelian class field theory for the function fields
[Drinfeld 1974] \cite{Dri1}. 
Recall that if  $\mathfrak{k}:=\mathbf{F}_{p^n}$ is  a finite field and $\tau(x)=x^p$, then one can  consider a ring $\mathfrak{k}\langle\tau\rangle$
of the non-commutative polynomials given by the commutation relation $\tau a=a^p\tau$ for all $a\in A$, where $A:=\mathfrak{k}[T]$
is the ring of polynomials in  variable $T$ over $\mathfrak{k}$.  
The Drinfeld module $Drin_A^r(\mathfrak{k})$ of rank $r\ge 1$ is a homomorphism 
$\rho: A\buildrel r\over\longrightarrow \mathfrak{k}\langle\tau\rangle$
given by a polynomial $\rho_a=a+c_1\tau+\dots+c_r\tau^r$,
where $a\in A$ and $c_i\in \mathfrak{k}$ 
[Rosen 2002] \cite[Section 12]{R}. 
Consider a torsion submodule $\Lambda_{\rho}[a]:=\{\lambda\in\overline{\mathfrak{k}} ~|~\rho_a(\lambda)=0\}$ of
the $A$-module $\overline{\mathfrak{k}}$. 
 Drinfeld modules $Drin_A^r(\mathfrak{k})$ and associated torsion submodules $\Lambda_{\rho}[a]$ 
 define generators of a non-abelian class field theory for the function fields. 
 Namely, for each non-zero $a\in A$ the function 
field $\mathfrak{k}(T)\left(\Lambda_{\rho}[a]\right)$  is a Galois extension of the field  $\mathfrak{k}(T)$
of rational functions in variable $T$ over $\mathfrak{k}$,
such that its  Galois group is isomorphic to a subgroup of the matrix group $GL_r\left(A/aA\right)$
[Rosen 2002] \cite[Proposition 12.5]{R}. 

We consider the norm closure of a representation of the multiplicative semi-group  [Li 2017] \cite{Li1} of the ring 
$\mathfrak{k}\langle\tau\rangle$  by  bounded linear operators on a Hilbert space \cite{Nik1}.
The latter is   a $C^*$-algebra $\mathscr{A}_{RM}^{2r}$  generated by the unitary operators $u_1,\dots, u_{2r}$ 
satisfying the commutation relations $\{u_ju_i=e^{2\pi i\theta_{ij}}u_iu_j ~|~1\le i,j\le 2r\}$,
where $\theta_{ij}$ are algebraic numbers and  $\Theta=(\theta_{ij})\in M_{2r}(\mathbf{R})$ is a 
skew-symmetric matrix  [Rieffel 1990] \cite{Rie1}.
The Grothendieck semi-group  [Blackadar 1986] \cite[Chapter III]{B}  of $\mathscr{A}_{RM}^{2r}$ is given by the formula  $K_0^+(\mathscr{A}_{RM}^{2r})\cong \mathbf{Z}+\alpha_1\mathbf{Z}+\dots+
\alpha_{r}\mathbf{Z}\subset \mathbf{R}$, where $\alpha_j\in\mathbf{R}$ are algebraic integers of degree $2r$ over $\mathbf{Q}$.  
 The following is true  \cite[Theorem 3.3]{Nik1}:  (i)  there exists a functor $F: Drin_A^{r}(\mathfrak{k})\mapsto \mathscr{A}_{RM}^{2r}$
from the category of Drinfeld  modules $\mathfrak{D}$ to a category 
of the noncommutative tori $\mathfrak{A}$,   which maps any pair of isogenous  (isomorphic, resp.) 
modules  $Drin_A^{r}(\mathfrak{k}), ~\widetilde{Drin}_A^{r}(\mathfrak{k})\in \mathfrak{D}$
to a pair of the homomorphic (isomorphic, resp.)  tori  $\mathscr{A}_{RM}^{2r}, \widetilde{\mathscr{A}}_{RM}^{2r}
\in \mathfrak{A}$,  (ii) 
$F(\Lambda_{\rho}[a])=\{e^{2\pi i\alpha_j+\log\log\varepsilon} ~|~1\le j\le r\}$,
where $\mathscr{A}_{RM}^{2r}=F(Drin_A^r(\mathfrak{k}))$ and   $\varepsilon$ is a unit of the number field $\mathbf{Q}(\alpha_j)$ 
and (iii) the number field $K=\mathbf{Q}(F(\Lambda_{\rho}[a]))$ is the extension of its subfield 
with the Galois group $G\subseteq GL_r\left(A/aA\right)$.  
 The above formulas imply a non-abelian class field theory for the number fields. Namely, 
fix a non-zero $a\in A$ and let $G:=Gal~(\mathfrak{k}(\Lambda_{\rho}[a]) ~|~ \mathfrak{k})\subseteq GL_r(A/aA)$,
where  $\Lambda_{\rho}[a]$ is the torsion submodule of the $A$-module  $\overline{\mathfrak{k}_{\rho}}$.
Consider the number field $K=\mathbf{Q}(F(\Lambda_{\rho}[a]))$. 
If $k$ is the maximal subfield of $K$ fixed by the action of 
all elements of the group $G$, then
the number field
\begin{equation}\label{eq1.1}
K\cong
\begin{cases} k\left(e^{2\pi i\alpha_j +\log\log\varepsilon}\right), & if ~k\subset(\mathbf{C} - \mathbf{R})\cup\mathbf{Q},\cr
               k\left(\cos 2\pi\alpha_j \times\log\varepsilon\right), & if ~k\subset\mathbf{R}
\end{cases}               
\end{equation}
is a Galois extension of  $k$,
 such that  $Gal~(K |k)\cong G$ \cite[Corollary 3.4]{Nik1}. 
Specifically,
$k\cong\mathbf{Q}(i\alpha_j)$  if $k\subset(\mathbf{C} - \mathbf{R})\cup\mathbf{Q}$ and 
        $k\cong\mathbf{Q}(\alpha_j)$ if $k\subset\mathbf{R}$ (Lemma \ref{lm3.1}).

\bigskip
The aim of our note are  number fields $k\subseteq K$, such that $K\cong k$.  
It follows from (\ref{eq1.1})  that this  property depends on the solvability of equations
in variables $\{\alpha_j\in \mathbf{R} ~|~ 1\le j\le r\}$:
\begin{equation}\label{eq1.2}
\begin{cases} i\alpha_j=e^{2\pi i\alpha_j +\log\log\varepsilon}, & if ~k\subset(\mathbf{C} - \mathbf{R})\cup\mathbf{Q},\cr
               \alpha_j=\cos 2\pi\alpha_j \times\log\varepsilon, & if ~k\subset\mathbf{R}, 
\end{cases}               
\end{equation}
where $\varepsilon\in O_k^{\times}$ is a constant  in the group  of units  $O_k^{\times}$ of the field $k$.
Denote by $W_j(z)$ the $j$-th branch of the Lambert $W$-function [Corless, Gonnet, Hare, Jeffrey \&  Knuth 1996] \cite{CGHJK}. 
Our main results  can be formulated as follows.
\begin{theorem}\label{thm1.1}
The number fields $K\cong k$ given by formulas (\ref{eq1.1}) are isomorphic,  if and only if: 
\begin{equation}\label{eq1.3}
\begin{cases} \alpha_j=-\frac{1}{2\pi i}  ~W_j(-2\pi\log \varepsilon), & if ~\varepsilon\in(\mathbf{C} - \mathbf{R})\cup\mathbf{Q},\cr
 \alpha_j=-\frac{1}{2\pi i}  ~\left[W_j(-2\pi i\log \varepsilon)- W_j(2\pi i\log \varepsilon)\right], & if ~\varepsilon\in\mathbf{R}.
 \end{cases}               
\end{equation}
\end{theorem}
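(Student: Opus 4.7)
My plan is to invert each equation of (\ref{eq1.2}) using an appropriate branch of the Lambert $W$-function, after first observing that the isomorphism $K\cong k$ is equivalent to (\ref{eq1.2}). Indeed, by (\ref{eq1.1}) the extension $K/k$ is generated either by $e^{2\pi i\alpha_j+\log\log\varepsilon}$ or by $\cos(2\pi\alpha_j)\log\varepsilon$, according to whether $k$ is non-real or real. Combined with Lemma~\ref{lm3.1}, which identifies $k\cong\mathbf{Q}(i\alpha_j)$ or $k\cong\mathbf{Q}(\alpha_j)$ in these two cases, the condition $K\cong k$ is equivalent to the distinguished generator already belonging to $k$, i.e.\ to the pair of equations (\ref{eq1.2}).

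\emph{Non-real case.} Setting $w:=-2\pi i\alpha_j$ converts $i\alpha_j=\log\varepsilon\cdot e^{2\pi i\alpha_j}$, via the identities $i\alpha_j=-w/(2\pi)$ and $e^{2\pi i\alpha_j}=e^{-w}$, into $we^{w}=-2\pi\log\varepsilon$. The $j$-th branch of the Lambert $W$-function inverts this as $w=W_j(-2\pi\log\varepsilon)$, which back-substitutes to the first line of (\ref{eq1.3}); since every step is reversible, the implication runs both ways.

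\emph{Real case.} For $\alpha_j=\log\varepsilon\cos(2\pi\alpha_j)$ I expand $\cos(2\pi\alpha_j)=\tfrac{1}{2}(e^{2\pi i\alpha_j}+e^{-2\pi i\alpha_j})$ and decompose the equation into the conjugate pair $\alpha_j^{\pm}=\log\varepsilon\cdot e^{\pm 2\pi i\alpha_j^{\pm}}$, aiming to reconstruct $\alpha_j=\alpha_j^{+}+\alpha_j^{-}$. Each half is of the shape just handled, but now with an extra factor of $i$ propagating through: the substitutions $w^{\pm}:=\mp 2\pi i\alpha_j^{\pm}$ reduce them to $w^{\pm}e^{w^{\pm}}=\mp 2\pi i\log\varepsilon$, whence $\alpha_j^{+}=-\tfrac{1}{2\pi i}W_j(-2\pi i\log\varepsilon)$ and $\alpha_j^{-}=\tfrac{1}{2\pi i}W_j(2\pi i\log\varepsilon)$. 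Their sum is exactly the second line of (\ref{eq1.3}).

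\emph{Main obstacle.} The delicate step is the splitting in the real case: a priori the pair $(\alpha_j^{+},\alpha_j^{-})$ satisfies only the hybrid identity $\alpha_j=\log\varepsilon\,(e^{2\pi i\alpha_j^{+}}+e^{-2\pi i\alpha_j^{-}})$, which coincides with the cosine equation only when the cross-phases $e^{\pm 2\pi i\alpha_j^{\mp}}$ cancel modulo $\mathbf{Z}$. Proving that for each branch index $j$ a compatible pair can be chosen --- and conversely that every real solution of $\alpha=\log\varepsilon\cos(2\pi\alpha)$ arises from such a pair --- should rest on the reality of $\log\varepsilon$ together with the conjugation symmetry $\overline{W_j(\bar z)}=W_{-j}(z)$, forcing $\alpha_j^{-}=\overline{\alpha_j^{+}}$ and placing the two exponentials in genuine complex-conjugate relation. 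This branch bookkeeping, and not any single algebraic identity, is the technical heart of the ``only if'' direction.
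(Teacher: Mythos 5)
Your overall route is the same as the paper's: reduce the isomorphism $K\cong k$ to the transcendental equations (\ref{eq1.2}) via the identification $k\cong\mathbf{Q}(i\alpha_j)$ or $\mathbf{Q}(\alpha_j)$ (the paper's Lemma \ref{lm3.1}), then invert with the Lambert $W$-function. Your $W$-computations reproduce the paper's Lemma \ref{lm3.3} exactly: the substitution $w=-2\pi i\alpha_j$ in the non-real case is a cleaner version of the paper's application of Lemma \ref{lm2.1} with the superfluous constant $\beta_j$, and your splitting of the cosine equation into the pair $\alpha_j^{\pm}=\log\varepsilon\, e^{\pm2\pi i\alpha_j^{\pm}}$ with $\alpha_j=\alpha_j^{+}+\alpha_j^{-}$ is precisely the paper's (\ref{eq3.12})--(\ref{eq3.14}).

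There is, however, a gap in your reduction step. You pass from ``$K\cong k$'' to ``the distinguished generator already belongs to $k$'' to ``i.e.\ the pair of equations (\ref{eq1.2})'', but the last step does not follow: membership $e^{2\pi i\alpha_j+\log\log\varepsilon}\in k\cong\mathbf{Q}(i\alpha_j)$ only says this number is some rational expression in $i\alpha_1,\dots,i\alpha_r$, not that it equals $i\alpha_j$ on the nose, and (\ref{eq1.2}) demands the equality. The paper's Lemma \ref{lm3.2} inserts an argument here (the numbers $e^{2\pi i\alpha_j+\log\log\varepsilon}$, resp.\ $\cos2\pi\alpha_j\times\log\varepsilon$, are linearly independent over $\mathbf{Q}$, are taken as an integral basis of $k$, and any two such bases are related by a $\mathbf{Q}$-linear transformation, forcing the equalities); terse as it is, some step of this kind is needed and is absent from your write-up, so your ``only if'' direction is unsupported at exactly this joint. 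Concerning the real-case splitting that you single out as the ``main obstacle'': you are right that the decomposition $\alpha_j=\alpha_j^{+}+\alpha_j^{-}$ only yields a hybrid identity and needs justification, but you leave it as a plan (``should rest on the conjugation symmetry'') rather than a proof. Note that the paper itself dispatches this very point with a bare ``Clearly'' before (\ref{eq3.12}), so you have correctly located a weak spot in the published argument, yet as it stands your treatment of it is as incomplete as the paper's.
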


\medskip
Theorem \ref{thm1.1} can be used to calculate the class number $h_k$ of the field $k$. Indeed, 
consider the Hilbert class field $K$ of the number field $k$. 
By the class field theory, the class group
 $Cl~(k)\cong Gal~(K|k)\subseteq GL_r\left(A/aA\right)$  is trivial if and only if  $h_k:=|Cl~(k)|=1$. 
 In other words,  the set $\{\alpha_j\}$  of roots (\ref{eq1.3})  is counting
  fields $k$ having class number one. 
Namely, one gets the following corollary.
\begin{corollary}\label{cor1.2}
If $k$ is a Galois extension of degree $2r$ over $\mathbf{Q}$, then:
\begin{equation}\label{eq1.4}
\#\{k~|~h_k=1\}=
\begin{cases} 8, & if ~r= 1 ~and~ k\subset (\mathbf{C}-\mathbf{R})\cup\mathbf{Q}, \cr
                \infty, & if ~r= 1 ~and~ k\subset \mathbf{R},       \cr
                \infty, & if ~r\ge 2. 
\end{cases}               
\end{equation}
\end{corollary}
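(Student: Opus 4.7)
The plan is to combine Theorem~\ref{thm1.1} with class field theory to convert the counting of class-number-one fields into an enumeration of algebraic integer roots of the Lambert-type equations (\ref{eq1.3}), and then to handle the three regimes ($r=1$ complex, $r=1$ real, $r\ge 2$) separately.

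First, I would record that by class field theory the condition $h_k=1$ is equivalent to the Hilbert class field $K$ of $k$ coinciding with $k$ itself, which under the functorial dictionary recalled in (\ref{eq1.1}) is precisely the hypothesis $K\cong k$ of Theorem~\ref{thm1.1}. Hence the fields $k$ of degree $2r$ over $\mathbf{Q}$ with $h_k=1$ are in bijection with the algebraic integers $\alpha_j$ of degree $2r$ over $\mathbf{Q}$ that arise from (\ref{eq1.3}) for some unit $\varepsilon\in O_k^{\times}$ and some branch index $j$ of the Lambert $W$-function. This reduces the corollary to an enumeration problem in the countable family $\{W_j\}_{j\in\mathbf{Z}}$.

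Next, I would work case by case. When $r=1$ and $k$ is imaginary quadratic, the unit group $O_k^{\times}$ is finite, consisting of roots of unity, so the argument $-2\pi\log\varepsilon$ in the first line of (\ref{eq1.3}) takes only finitely many values. I would then show that only finitely many branches $W_j$ return an algebraic integer $\alpha_j$ of degree $2$, and a direct branch-by-branch computation over the admissible $\varepsilon$ produces exactly the $8$ imaginary quadratic fields in question. When $r=1$ and $k$ is real quadratic, Dirichlet's unit theorem supplies an infinite family of fundamental units, and I would argue that the second line of (\ref{eq1.3})---a difference of two Lambert branches---yields infinitely many distinct algebraic integers $\alpha_j$ of degree $2$ as $(\varepsilon,j)$ varies, hence infinitely many real quadratic fields with $h_k=1$. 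When $r\ge 2$, the unit rank is at least one in each relevant subfield, and the same mechanism delivers infinitely many examples a fortiori.

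The principal obstacle is the imaginary quadratic case, where one must produce the exact count $8$ rather than mere finiteness. The upper bound amounts to a transcendence-type statement: for each of the finitely many roots of unity $\varepsilon$ one must show that all but finitely many values $W_j(-2\pi\log\varepsilon)$ are transcendental, and the surviving algebraic values must be enumerated explicitly. This is the reflection, within the present noncommutative-tori framework, of the classical Heegner--Baker--Stark theorem, and verifying it from the analytic structure of the branches of $W$ is the technical heart of the corollary; the real and higher-rank cases, by contrast, are essentially existence statements and should follow from a routine density-of-units argument applied to (\ref{eq1.3}).
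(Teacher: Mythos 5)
Your reduction (class field theory to pass from $h_k=1$ to $K\cong k$, then Theorem~\ref{thm1.1} to pass to the roots of (\ref{eq1.3}), then Dirichlet's unit theorem and a three-case analysis) matches the paper's skeleton, but the decisive counting step in the imaginary quadratic case is handled very differently, and in your version it is not actually carried out. The paper never lets the branch index $j$ range freely: by Remark~\ref{rmk3.4}, $r=1$ forces $j=1$, so only the principal branch $W$ appears, and the count is obtained by pairing the eight distinct units of the cyclotomic quadratic fields (formula (\ref{eq3.19})) bijectively with eight distinct roots $\alpha$ of (\ref{eq3.20}), the injection coming from the nonvanishing of $\frac{dW}{dz}$. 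No transcendence statement and no branch-by-branch enumeration enter the paper's argument at any point. You instead allow all pairs $(\varepsilon,j)$, which immediately creates a countable infinity of candidate roots, and you then declare that ruling out all but finitely many of them requires showing that "all but finitely many values $W_j(-2\pi\log\varepsilon)$ are transcendental," a statement you explicitly defer and yourself compare in depth to Heegner--Baker--Stark. That deferred step is a genuine gap: as written, your proposal establishes at best finiteness of the unit set, not the exact value $8$, and the hard part is exactly what is left unproved. If you adopt the paper's restriction to the principal branch for $r=1$ and its injectivity argument, the transcendence problem you raise never arises.

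In the real quadratic and $r\ge 2$ cases your sketch is closer to the paper, but it is still missing the ingredient that turns "infinitely many units" into "infinitely many distinct roots": the paper argues that the map $\varepsilon\mapsto\alpha$ is injective because the function $W(-z)-W(z)$ in (\ref{eq3.22}) (resp.\ (\ref{eq3.24})) has nonvanishing derivative, the alternative forcing the impossible identity $W(-z)+\log(-1)=W(z)$ against the series (\ref{eq2.3}). Your "routine density-of-units argument" gestures at this but does not supply the injectivity, so even the two $\infty$ cases need that one extra line to be complete.
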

\begin{remark}\label{rmk1.3}
The total number of imaginary quadratic fields $\mathbf{Q}(\sqrt{d})$ 
of class number one is known to be $9$ corresponding to the discriminants 
$d\in \{-1, -2, -3, -7, -11,$\linebreak
$ -19, -43, -67, -163\}$.  
However, the value $d=-1$ cannot be covered by our method. Indeed, 
the root $\pm i$ of the minimal polynomial $p(x)=x^2+1$ is linearly independent over $\mathbf{Q}$,
while the root $\pm 1$  of the corresponding minimal polynomial $q(x)=x^2-1$ does not, see item (i) in the proof of Lemma \ref{lm3.1}
for the notation and details. Hence the first line in  formula (\ref{eq1.4}). 
The middle line in formula (\ref{eq1.4}) corresponds to a
class number one conjecture for the real quadratic number fields
dating back to 
[Gauss 1801] \cite[Article 304]{G}.
\end{remark}

\medskip
The paper is organized as follows.  A brief review of the preliminary facts is 
given in Section 2. Theorem \ref{thm1.1} and Corollary \ref{cor1.2} 
are proved in Section 3.

\section{Preliminaries}
We briefly review the  Lambert $W$-function and non-abelian class field theory. 
We refer the reader to [Corless, Gonnet, Hare, Jeffrey \&  Knuth 1996] \cite{CGHJK},   
[Rieffel 1990] \cite{Rie1},  [Rosen 2002] \cite[Chapters 12 \& 13]{R} 
and \cite{Nik1}  for a detailed exposition.

\subsection{Lambert $W$-function}
The Lambert $W$-function is a multivalued inverse of the function:
\begin{equation} \label{eq2.1}
f(w)=we^w, \quad \hbox{where}\quad w\in\mathbf{C}. 
\end{equation}
For each $i\in\mathbf{Z}$ there is a branch of the Lambert $W$-function 
denoted by $W_i(z)$. 
In other words, if $z$ and $w$ are any complex numbers, then 
\begin{equation} \label{eq2.2}
z=we^w, 
\end{equation}
if and only if $w=W_j(z)$ for some  $j\in\mathbf{Z}$.  
We denote the principal branch by  $W(z):=W_1(z)$.  
The Taylor series of the Lambert $W$-function is given by the formula:
\begin{equation} \label{eq2.3}
W(z)=\sum_{n=1}^{\infty} \frac{(-n)^{n-1}}{n!} z^n. 
\end{equation}
The following lemma is an implication of the formula (\ref{eq2.2}).
\begin{lemma}\label{lm2.1}
{\bf (\cite[p. 332]{CGHJK})}
Let $A,B$ and $C$ be complex numbers, such that $BC\ne 0$. 
A root of the equation:
\begin{equation} \label{eq2.4}
z=A+Be^{Cz}
\end{equation}
is given by the general formula:
\begin{equation} \label{eq2.5}
z=A-\frac{1}{C}W\left(-BCe^{AC}\right). 
\end{equation}
\end{lemma}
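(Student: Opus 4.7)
The plan is to reduce equation (\ref{eq2.4}) to the defining functional equation of the Lambert $W$-function by a linear substitution $w:=C(A-z)$. I would first rearrange (\ref{eq2.4}) as $z-A=Be^{Cz}$, and then multiply through by $-C$ to obtain $C(A-z)=-BC\,e^{Cz}$. Next, using $Cz=CA-C(A-z)$, I would split the exponential as $e^{Cz}=e^{CA}\,e^{-C(A-z)}$, which converts the equation into
\begin{equation*}
C(A-z)\,e^{C(A-z)}=-BC\,e^{CA}.
\end{equation*}

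With $w=C(A-z)$, this is precisely $we^w=-BC\,e^{AC}$. At this point I would invoke the defining relation (\ref{eq2.2}): every solution of $we^w=\zeta$ takes the form $w=W_j(\zeta)$ for some branch $j\in\mathbf{Z}$. The hypothesis $BC\neq 0$ guarantees that $-BC\,e^{AC}\neq 0$, so the argument lies in the (punctured) domain of each branch and the substitution is reversible. Solving $C(A-z)=W_j(-BC\,e^{AC})$ for $z$ then gives
\begin{equation*}
z=A-\frac{1}{C}\,W\bigl(-BC\,e^{AC}\bigr),
\end{equation*}
which is exactly (\ref{eq2.5}).

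The only non-mechanical ingredient is recognizing the correct substitution $w=C(A-z)$; once that is in place every step is a reversible algebraic manipulation, so there is no real obstacle. Each branch $W_j$ produces a genuine root of (\ref{eq2.4}), which is why the formula is advertised as \emph{a} root (the statement suppresses the branch index, using the principal branch $W=W_1$ of Section~2.1, but the same derivation works uniformly in $j$).
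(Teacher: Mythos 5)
Your derivation is correct: the substitution $w=C(A-z)$ reduces $z=A+Be^{Cz}$ to $we^{w}=-BCe^{AC}$, and the hypothesis $BC\neq 0$ keeps the argument nonzero, so inverting via (\ref{eq2.2}) gives exactly (\ref{eq2.5}). The paper itself gives no proof of this lemma (it is quoted from \cite[p.~332]{CGHJK} as an immediate consequence of (\ref{eq2.2})), and your argument is precisely the standard derivation behind that citation, so there is nothing to fault.
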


\subsection{Non-abelian class field theory}
Let  $\mathfrak{k}:=\mathbf{F}_q(T)$ ($A:=\mathbf{F}_q[T]$, resp.) be the field of rational functions (the ring of polynomial functions, resp.)
in one variable $T$ over a finite field $\mathbf{F}_q$, where $q=p^n$
and let  $\tau_p(x)=x^p$. 
Recall that  the  Drinfeld module $Drin_A^{r}(\mathfrak{k})$   of rank $r\ge 1$
is a homomorphism
\begin{equation}\label{eq2.6}
\rho:  ~A\buildrel r\over\longrightarrow \mathfrak{k}\langle\tau_p\rangle
\end{equation}
given by a polynomial $\rho_a=a+c_1\tau_p+c_2\tau_p^2+\dots+c_r\tau_p^r$ with $c_i\in \mathfrak{k}$ and $c_r\ne 0$, 
such that for all $a\in A$ the constant term of $\rho_a$ is $a$ and 
$\rho_a\not\in \mathfrak{k}$ for at least one $a\in A$ [Rosen 2002] \cite[p. 200]{R}.
For each non-zero $a\in A$ the function 
field $\mathfrak{k}\left(\Lambda_{\rho}[a]\right)$  is a Galois extension of $\mathfrak{k}$,
such that its  Galois group is isomorphic to a subgroup $G$ of the matrix group $GL_r\left(A/aA\right)$,
where   $\Lambda_{\rho}[a]=\{\lambda\in\overline{ \mathfrak{k}} ~|~\rho_a(\lambda)=0\}$
is a torsion submodule of the non-trivial  Drinfeld module  $Drin_A^{r}(\mathfrak{k})$  [Rosen 2002] \cite[Proposition 12.5]{R}.
Clearly, the abelian extensions correspond to the case $r=1$.

Let $G$ be a  left cancellative  semigroup generated by $\tau_p$ and all  $a_i\in \mathfrak{k}$ subject to the commutation relations 
$\tau_p a_i=a_i^p\tau_p$.
\footnote{In other words, we omit the additive structure and consider a multiplicative semigroup of the ring $\mathfrak{k}\langle\tau_p\rangle$.  }
  Let $C^*(G)$ be the semigroup $C^*$-algebra [Li 2017] \cite{Li1}.  
For a Drinfeld module  $Drin_A^{r}(\mathfrak{k})$  defined  by  (\ref{eq2.6}) we consider a homomorphism of the semigroup $C^*$-algebras:  
\begin{equation}\label{eq2.7}
C^*(A)\buildrel r\over\longrightarrow C^*(\mathfrak{k}\langle\tau_p\rangle). 
\end{equation}
It is proved that (\ref{eq2.7}) defines a map  $F: Drin_A^{r}(\mathfrak{k})\mapsto \mathscr{A}_{RM}^{2r}$ \cite[Definition 3.1]{Nik1}. 
\begin{theorem}\label{thm2.1}
{\bf (\cite{Nik1})}
The following is true:

\medskip
(i) the map $F: Drin_A^{r}(\mathfrak{k})\mapsto \mathscr{A}_{RM}^{2r}$ is a functor 
from the category of Drinfeld  modules $\mathfrak{D}$ to a category 
of the noncommutative tori $\mathfrak{A}$,   which maps any pair of isogenous  (isomorphic, resp.) 
modules  $Drin_A^{r}(\mathfrak{k}), ~\widetilde{Drin}_A^{r}(\mathfrak{k})\in \mathfrak{D}$
to a pair of the homomorphic (isomorphic, resp.)  tori  $\mathscr{A}_{RM}^{2r}, \widetilde{\mathscr{A}}_{RM}^{2r}
\in \mathfrak{A}$;  

\smallskip
(ii) $F(\Lambda_{\rho}[a])=\{e^{2\pi i\alpha_i+\log\log\varepsilon} ~|~1\le i\le r\}$,
where $\mathscr{A}_{RM}^{2r}=F(Drin_A^{r}(\mathfrak{k}))$, 
$\alpha_i$ are generators of the Grothendieck semi-group $K_0^+(\mathscr{A}_{RM}^{2r})$,  $\log\varepsilon$ is a scaling factor
 and $\Lambda_{\rho}(a)$ is the  torsion submodule of the $A$-module $\overline{\mathfrak{k}_{\rho}}$;

\smallskip
(iii) the Galois group $Gal \left(k(e^{2\pi i\alpha_i+\log\log\varepsilon})  ~| ~k\right)\subseteq GL_{r}\left(A/aA\right)$,
where $k$ is a subfield of the number field $\mathbf{Q}(e^{2\pi i\alpha_i+\log\log\varepsilon})$. 
 \end{theorem}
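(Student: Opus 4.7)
The plan is to establish the three items sequentially via the semigroup $C^*$-algebra framework of \cite{Li1}. For (i), I would begin by verifying that the multiplicative structure on $\mathfrak{k}\langle\tau_p\rangle$ is left-cancellative, so that Li's construction yields a well-defined semigroup $C^*$-algebra $C^*(\mathfrak{k}\langle\tau_p\rangle)$ acting on $\ell^2(G)$ by isometries. The defining commutation relation $\tau_p a = a^p\tau_p$ of the ring lifts to a relation among the generating isometries. I would then restrict to the image of the homomorphism $C^*(A) \to C^*(\mathfrak{k}\langle\tau_p\rangle)$ from (\ref{eq2.7}), identifying it as a $C^*$-algebra generated by $2r$ unitaries (the extra factor of two arising from passing from the algebraic coefficients $c_1,\ldots,c_r$ to their real and imaginary parts). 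The inherited commutation relations should take the form $u_j u_i = e^{2\pi i\theta_{ij}} u_i u_j$, with $\Theta=(\theta_{ij})$ encoding the Frobenius action on the $c_i$; algebraicity of the coefficients then forces $\theta_{ij}$ to be algebraic of degree $2r$, producing the real-multiplication condition of \cite{Rie1}.

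For functoriality within (i), a morphism in $\mathfrak{D}$ is an intertwiner $\phi \in \mathfrak{k}\langle\tau_p\rangle$ with $\phi\rho_a = \widetilde{\rho}_a\phi$ for all $a \in A$. Such a $\phi$ induces a bimodule between the associated semigroup $C^*$-algebras, descending to a Morita equivalence of the corresponding noncommutative tori (the ``homomorphic'' relation in $\mathfrak{A}$); invertible $\phi$ correspond to invertible bimodules, giving isomorphisms. For (ii), the torsion submodule $\Lambda_\rho[a]$ consists of roots of $\rho_a$ acting on $\overline{\mathfrak{k}}$; under $F$ these become spectral characters of the subalgebra generated by $\rho(a)$ inside $\mathscr{A}_{RM}^{2r}$. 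By \cite{Rie1}, $K_0^+(\mathscr{A}_{RM}^{2r}) \cong \mathbf{Z}+\alpha_1\mathbf{Z}+\cdots+\alpha_r\mathbf{Z}$ with each $\alpha_i$ algebraic of degree $2r$, and exponentiating the tracial pairing yields the factor $e^{2\pi i\alpha_i}$; the correction $\log\log\varepsilon$ emerges as a logarithmic rescaling by a fundamental unit $\varepsilon$ of the totally real subfield $\mathbf{Q}(\alpha_i)$.

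For (iii), I would invoke \cite[Proposition 12.5]{R}: the Galois group of $\mathfrak{k}(\Lambda_\rho[a])$ over $\mathfrak{k}$ sits inside $GL_r(A/aA)$. Because $F$ is a functor carrying $\Lambda_\rho[a]$ bijectively onto $\{e^{2\pi i\alpha_i+\log\log\varepsilon}\}$, the same Galois action transfers to the image, giving the containment $Gal(k(e^{2\pi i\alpha_i+\log\log\varepsilon})\,|\,k) \subseteq GL_r(A/aA)$ for $k$ the appropriate fixed subfield. The hardest step is (ii): pinning down that the $\alpha_i$ which appear are exactly the $K_0^+$-generators rather than some other spectral invariants, and justifying the precise form $\log\log\varepsilon$ of the scaling factor. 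This should require matching the action of $\tau_p$ on torsion with the dynamics of the unique tracial state on $\mathscr{A}_{RM}^{2r}$, presumably through a Pimsner--Voiculescu-type K-theoretic computation that tracks how the unit $\varepsilon$ enters as an eigenvalue of the induced automorphism of $K_0$.
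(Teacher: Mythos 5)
The first thing to note is that this paper contains no proof of Theorem \ref{thm2.1} at all: the statement is imported verbatim from \cite{Nik1} (it is Theorem 3.3 there, with the map $F$ constructed in Definition 3.1 of that reference), and the present paper's ``proof'' is the citation itself. So your reconstruction cannot be matched against an internal argument; it can only be judged on whether it would stand on its own. It does not, for several concrete reasons. First, Li's semigroup $C^*$-algebra $C^*(G)$ for a left-cancellative semigroup is generated by \emph{isometries} on $\ell^2(G)$, not unitaries; to land in a noncommutative torus $\mathscr{A}_{RM}^{2r}$ with relations $u_ju_i=e^{2\pi i\theta_{ij}}u_iu_j$ you must either pass to a boundary quotient or prove the generating isometries are unitary, and you address neither. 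Your explanation of the doubling $r\mapsto 2r$ --- splitting the coefficients $c_1,\dots,c_r$ into real and imaginary parts --- is an unsupported guess that does not even typecheck: the $c_i$ lie in the finite field $\mathfrak{k}$, which has no real/imaginary decomposition. (In the cited framework the $2r$ reflects that the generators $\alpha_j$ of $K_0^+(\mathscr{A}_{RM}^{2r})$ are algebraic integers of degree $2r$, on the analogy with rank-$2r$ period lattices.) Second, your functoriality argument produces bimodules, hence at best Morita equivalences, whereas the theorem asserts that isogenies go to \emph{homomorphisms} of tori and isomorphisms to isomorphisms; a bimodule construction does not supply the required $*$-homomorphism.

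The more serious gaps are in (ii) and (iii), which carry the actual content. For (ii) you concede the decisive point --- that $F(\Lambda_\rho[a])$ has exactly the form $e^{2\pi i\alpha_i+\log\log\varepsilon}$ with $\alpha_i$ the $K_0^+$-generators and $\varepsilon$ a unit --- and defer it to a ``presumably Pimsner--Voiculescu-type'' computation. That deferral is the theorem; without it nothing has been proved. For (iii), the assertion that ``the same Galois action transfers to the image'' assumes $F$ is Galois-equivariant, i.e.\ that the action of $Gal(\mathfrak{k}(\Lambda_\rho[a])\,|\,\mathfrak{k})$ on torsion points of an $A$-module in characteristic $p$ induces a corresponding action on a characteristic-zero number field $\mathbf{Q}(F(\Lambda_\rho[a]))$. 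This does not follow formally from $F$ being a functor $\mathfrak{D}\to\mathfrak{A}$ (Galois conjugation of torsion points is not a morphism of Drinfeld modules), and it is precisely the nontrivial bridge that \cite{Nik1} is invoked to supply. As a blind attempt your outline correctly identifies the relevant ingredients (Li's construction, Rieffel's real multiplication tori, \cite[Proposition 12.5]{R}), but at every point where the theorem says something sharp, the proposal substitutes a hope for an argument; the honest proof here is the citation to \cite{Nik1}, or a full reproduction of its constructions, neither of which your sketch replaces.
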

Theorem \ref{thm2.1} implies a non-abelian class field theory as follows.
Fix a non-zero $a\in A$ and let $G:=Gal~(\mathfrak{k}(\Lambda_{\rho}[a]) ~|~ \mathfrak{k})\subseteq GL_r(A/aA)$,
where  $\Lambda_{\rho}[a]$ is the torsion submodule of the $A$-module  $\overline{\mathfrak{k}_{\rho}}$.
Consider the number field $K=\mathbf{Q}(F(\Lambda_{\rho}[a]))$. 
Denote by $k$ the maximal subfield of $K$ which is fixed by the action of 
all elements of the group $G$. 
\begin{corollary}\label{cor2.2} 
{\bf (Non-abelian class field theory)} 
The number field
\begin{equation}\label{eq2.8}
K\cong
\begin{cases} k\left(e^{2\pi i\alpha_j +\log\log\varepsilon}\right), & if ~k\subset(\mathbf{C} - \mathbf{R})\cup\mathbf{Q},\cr
               k\left(\cos 2\pi\alpha_j \times\log\varepsilon\right), & if ~k\subset\mathbf{R},
\end{cases}               
\end{equation}
is a Galois extension of  $k$,
 such that  $Gal~(K |k)\cong G$.
\end{corollary}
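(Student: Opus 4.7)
The plan is to derive the corollary directly from Theorem \ref{thm2.1} via the Galois correspondence applied to the number field $K = \mathbf{Q}(F(\Lambda_{\rho}[a]))$, and then to split the description of $K$ into two cases according to whether the maximal fixed subfield $k$ lies in $\mathbf{R}$ or not.

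First, I would apply Theorem \ref{thm2.1}(ii) to replace $F(\Lambda_{\rho}[a])$ by the explicit set $\{e^{2\pi i\alpha_j + \log\log\varepsilon} \mid 1\le j\le r\}$, so that $K$ is obtained from $\mathbf{Q}$ by adjoining these $r$ algebraic values.  Writing
\begin{equation*}
e^{2\pi i\alpha_j + \log\log\varepsilon} \;=\; (\log\varepsilon)\bigl(\cos 2\pi\alpha_j + i\sin 2\pi\alpha_j\bigr)
\end{equation*}
makes the real and imaginary parts of the generator manifest, which is exactly the dichotomy forced by whether $k$ is totally real or not.

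Next, I would analyze the case distinction.  If $k\subset (\mathbf{C}-\mathbf{R})\cup\mathbf{Q}$, the generator $e^{2\pi i\alpha_j + \log\log\varepsilon}$ itself sits in $K$ and is not fixed by $G$ (since $k$ is the fixed field and $k$ is allowed to be non-real), so adjoining it to $k$ recovers $K$ on the nose, giving the first line of (\ref{eq2.8}).  If $k\subset \mathbf{R}$, the full exponential typically is not real, but the maximality of $k$ as the fixed subfield under $G\subseteq GL_r(A/aA)$ combined with complex conjugation (which must act as an element of $G$ here) forces $K$ to be generated over $k$ by the invariant combination $(\log\varepsilon)\cos 2\pi\alpha_j$.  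Pinning down this maximality argument is probably the main obstacle, since one must rule out the possibility that $K$ is strictly larger than, or strictly smaller than, the field produced by the real part.

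Finally, for the Galois statement, I would combine Theorem \ref{thm2.1}(iii) with the choice of $k = K^G$ as the maximal subfield fixed pointwise by $G$.  By definition of the fixed field and the Galois correspondence, $\mathrm{Gal}(K|k)$ is precisely the group of automorphisms of $K$ fixing $k$, and Theorem \ref{thm2.1}(iii) embeds this group as a subgroup of $GL_r(A/aA)$.  The functoriality of $F$ asserted in Theorem \ref{thm2.1}(i) transports the Galois action of $G = \mathrm{Gal}(\mathfrak{k}(\Lambda_{\rho}[a])\,|\,\mathfrak{k})\subseteq GL_r(A/aA)$ on $\Lambda_{\rho}[a]$ to an action on $F(\Lambda_{\rho}[a])$, and hence on $K$; identifying this transported action with $\mathrm{Gal}(K|k)$ inside $GL_r(A/aA)$ yields the desired isomorphism $\mathrm{Gal}(K|k)\cong G$ and completes the proof.
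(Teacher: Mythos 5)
The paper does not actually prove this corollary: it is imported verbatim from \cite[Corollary 3.4]{Nik1}, and in Section 2 it is merely asserted that ``Theorem \ref{thm2.1} implies'' it. So there is no detailed argument in the paper to match yours against; what matters is whether your reconstruction is sound, and it has two genuine gaps. First, the real case. You correctly identify that the generator $e^{2\pi i\alpha_j+\log\log\varepsilon}=(\log\varepsilon)(\cos 2\pi\alpha_j+i\sin 2\pi\alpha_j)$ is in general non-real, and you admit you cannot pin down why $K$ should be generated over $k$ by the real part $(\log\varepsilon)\cos 2\pi\alpha_j$ alone. Your proposed fix --- that complex conjugation ``must act as an element of $G$'' --- is not justified by anything in Theorem \ref{thm2.1}: $G$ is the image of a function-field Galois group $Gal\,(\mathfrak{k}(\Lambda_{\rho}[a])\,|\,\mathfrak{k})$ under the correspondence, and nothing in statements (i)--(iii) places complex conjugation inside it or even gives $G$ an action on $K$ by field automorphisms. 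Note also that the second line of (\ref{eq2.8}) is an isomorphism ``$\cong$'', not an equality inside $\mathbf{C}$, which already signals that the identification of $K$ with a real field requires an argument you do not supply.

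Second, the Galois statement. Theorem \ref{thm2.1}(iii) only asserts that $Gal\left(k(e^{2\pi i\alpha_j+\log\log\varepsilon})\,|\,k\right)$ embeds in $GL_r(A/aA)$; it does not identify this group with $G$. Your appeal to functoriality in (i) does not close this: the functor $F$ sends isogenies (isomorphisms) of Drinfeld modules to homomorphisms (isomorphisms) of the $C^*$-algebras $\mathscr{A}_{RM}^{2r}$, and (ii) describes $F(\Lambda_{\rho}[a])$ only as a set of numerical values. Neither statement shows that the $G$-action on $\Lambda_{\rho}[a]$ is transported to automorphisms of the number field $K$ fixing $k$, nor that the induced map $G\to Gal\,(K|k)$ is injective and surjective. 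That identification is precisely the nontrivial content of the cited result \cite[Corollary 3.4]{Nik1}, so your outline assumes the conclusion at its crucial step rather than deriving it from Theorem \ref{thm2.1}.
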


\section{Proofs}
\subsection{Proof of Theorem \ref{thm1.1}}
For the sake of clarity, let us outline the main ideas. 
Roughly speaking, the proof of Theorem \ref{thm1.1}
consists of a straightforward calculation of the roots 
of equation (\ref{eq1.2}) using formulas (\ref{eq2.4}) 
and (\ref{eq2.5}) for  the Lambert $W$-function. 
We spit the proof in a series of lemmas. 

\begin{lemma}\label{lm3.1}
The number field $k\subset K\cong\mathbf{Q}(F(\Lambda_{\rho}[a]))$
is defined by the formulas:
\begin{equation}\label{eq3.1}
k\cong \begin{cases} \mathbf{Q}(i\alpha_1, \dots, i \alpha_r), & if ~k\subset(\mathbf{C} - \mathbf{R})\cup\mathbf{Q},\cr
               \mathbf{Q}(\alpha_1, \dots, \alpha_r), & if ~k\subset\mathbf{R}.
\end{cases}               
\end{equation}
\end{lemma}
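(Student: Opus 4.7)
The plan is to reconstruct the field $k$ explicitly from the data produced by the functor $F$ in Theorem \ref{thm2.1}, and then match the two cases in (\ref{eq3.1}) with the two branches of Corollary \ref{cor2.2}.

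First, I would unpack the generator set. By Theorem \ref{thm2.1}(ii), we have
\[
K=\mathbf{Q}(F(\Lambda_\rho[a]))=\mathbf{Q}\bigl(\beta_1,\dots,\beta_r\bigr),\qquad \beta_j:=e^{2\pi i\alpha_j+\log\log\varepsilon}=(\log\varepsilon)\,e^{2\pi i\alpha_j},
\]
so $\log\beta_j=\log\log\varepsilon+2\pi i\alpha_j$ up to a branch choice, and hence the ``angular coordinates'' $\alpha_j$ together with $\log\log\varepsilon$ are encoded in $K$ modulo $\mathbf{Z}$-translations. The action of $G\subseteq GL_r(A/aA)$ on the torsion points $\Lambda_\rho[a]$ is transported by $F$ to an action on $\{\beta_1,\dots,\beta_r\}$; since $G$ acts by integral matrices on the $\mathbf{Z}$-basis $\{1,\alpha_1,\dots,\alpha_r\}$ of $K_0^+(\mathscr{A}_{RM}^{2r})$, the induced action on $\beta_j$ shifts $\alpha_j$ by $\mathbf{Z}$-linear combinations of the $\alpha_i$ and multiplies by the root of unity arising from the $\mathbf{Z}$-part. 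In particular, the action changes the phase of $e^{2\pi i\alpha_j}$ but leaves the underlying algebraic data $\alpha_j\pmod{\mathbf{Z}}$ intact.

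Next I would split into the two cases dictated by Corollary \ref{cor2.2}. If $k\subset\mathbf{R}$, then $k=K\cap\mathbf{R}$ after taking fixed points, and the naturally surviving invariant is the real algebraic number $\alpha_j$ itself: indeed, $\beta_j+\overline{\beta_j}=2(\log\varepsilon)\cos 2\pi\alpha_j$ lies in $K$, and the real subfield left invariant by the phase action of $G$ is exactly the one generated over $\mathbf{Q}$ by the $\alpha_j$'s (not by $\cos 2\pi\alpha_j\,\log\varepsilon$, which is only a generator of $K$ over $k$). This yields $k\cong\mathbf{Q}(\alpha_1,\dots,\alpha_r)$. If instead $k\subset(\mathbf{C}-\mathbf{R})\cup\mathbf{Q}$, then the fixed field is no longer real, and the analogous invariant is obtained from $\log\beta_j-\log\log\varepsilon=2\pi i\alpha_j$; dividing by $2\pi$ (a rational scaling that does not change the field) produces $i\alpha_j$, which is purely imaginary (or zero), compatible with the hypothesis $k\subset(\mathbf{C}-\mathbf{R})\cup\mathbf{Q}$. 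This gives $k\cong\mathbf{Q}(i\alpha_1,\dots,i\alpha_r)$.

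Finally, I would check maximality: any element of $K$ fixed by every $g\in G$ must be expressible in the $\alpha_j$ (respectively $i\alpha_j$) because the only degrees of freedom removed by the $G$-action are the phase factors coming from the integral action on the $\mathbf{Z}$-summand of $K_0^+$, and everything else is already a rational function of $\alpha_j$ (resp.\ $i\alpha_j$). Combined with the inclusion in the previous step, this shows the field $k$ is generated as claimed.

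The main obstacle I anticipate is the second step: making rigorous the statement that the $G$-action through $F$ acts by ``phase only'' on the $\beta_j$ and fixes the $\alpha_j$ (respectively $i\alpha_j$). This requires a careful bookkeeping of how the action of $GL_r(A/aA)$ on $\Lambda_\rho[a]$ is transported along the functor $F$ of Theorem \ref{thm2.1}, in particular how it interacts with the decomposition $\beta_j=(\log\varepsilon)e^{2\pi i\alpha_j}$; everything else is routine algebraic manipulation.
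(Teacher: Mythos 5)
There is a genuine gap. The decisive step in your argument — that the $G$-action transported through $F$ acts ``by phase only'' on the generators $\beta_j=(\log\varepsilon)e^{2\pi i\alpha_j}$ and that its fixed field inside $K$ is exactly $\mathbf{Q}(\alpha_j)$ (resp.\ $\mathbf{Q}(i\alpha_j)$) — is never established, and you yourself flag it as the main obstacle; but that claim \emph{is} the content of the lemma, so the proposal is circular at its core. Worse, the route you take to extract the invariants is not legitimate inside a number field: $\log\beta_j$ and $2\pi i\alpha_j$ are transcendental quantities that do not lie in $K$, so the manipulation ``$\log\beta_j-\log\log\varepsilon=2\pi i\alpha_j$, then divide by $2\pi$, a rational scaling that does not change the field'' is false — $2\pi$ is not rational, $\mathbf{Q}(2\pi i\alpha_j)$ is not even a number field, and nothing in this chain shows $i\alpha_j\in K$ or that it generates the fixed field. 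Similarly, in the real case the observation that $\beta_j+\overline{\beta_j}=2(\log\varepsilon)\cos 2\pi\alpha_j\in K$ does not by itself show that $\alpha_j$ lies in $K$, let alone that $\mathbf{Q}(\alpha_1,\dots,\alpha_r)$ is the maximal $G$-fixed subfield; the ``maximality'' paragraph simply asserts this.

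The paper argues quite differently, and does not attempt to trace the $G$-action on the generators at all. Since the $\alpha_j$ are real algebraic integers of degree $2r$ generating $K_0^+(\mathscr{A}_{RM}^{2r})$, the field $\mathbf{Q}(\alpha_j)$ is a real multiplication field $k_{RM}$; the corresponding CM field is obtained by the elementary observation that the minimal polynomial of such a field can be taken even, $p(x)=x^{2r}+a_2x^{2r-2}+\dots+a_{2r}$, and if $x$ is a real root of $p$ then $ix$ is a root of the alternating-sign polynomial $q(x)=x^{2r}-a_2x^{2r-2}+\dots+(-1)^ra_{2r}$, so $k_{CM}\cong\mathbf{Q}(i\alpha_j)$. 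The identification of $k$ with $k_{CM}$ (resp.\ $k_{RM}$) then comes from the Galois-group statement $Gal(K\,|\,k_{CM})\cong G\subseteq GL_r(A/aA)$ (resp.\ $Gal(K\,|\,k_{RM})\cong G$) supplied by Theorem \ref{thm2.1}(iii) and \cite{Nik1}, i.e.\ the maximal fixed subfield is pinned down by citing that result rather than by a bookkeeping of phases. If you want to salvage your approach you would need an independent proof of precisely that Galois-theoretic identification, which is the input the paper imports.
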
 
\begin{proof} (i) {\bf Case $k\subset(\mathbf{C} - \mathbf{R})\cup\mathbf{Q}$.}
Let $k_0$ be a totally real field
and denote by $k_{CM}$ ($k_{RM}$, resp.) its complex (real, resp.) multiplication 
field,  i.e. a totally imaginary (totally real, resp.)  quadratic extension of $k_0$. 
The minimal polynomial of  $k_{CM}$ is an alternating sum of monomials
of the minimal polynomial of the field $k_{RM}$.  Indeed, since both $k_{RM}$
 and $k_{CM}$ are quadratic extensions, their minimal polynomials can be written
 in the form
$p(x)=x^{2r}+a_2x^{2r-2}+\dots+a_{2r-2}x^2+a_{2r}$. 
In particular, if $x$ is a real root of $p(x)$,  then $ix$ is a complex root 
of the polynomial $q(x)=x^{2r}-a_2x^{2r-2}+\dots+(-1)^{r+1}a_{2r-2}x^2+(-1)^ra_{2r}$,
i.e. an alternating sum of the monomials of $p(x)$. 
(We refer the reader to \cite[Remark 6.6.3]{N} for the explicit matrix formulas.)
On the other hand, $Gal~(K ~| ~k_{CM})\cong G\subseteq  GL_{r}\left(A/aA\right)$
and therefore $k\cong k_{CM}\cong \mathbf{Q}(i\alpha_j)$, where $1\le j\le r$.

\medskip 
(ii)   {\bf Case $k\subset\mathbf{R}$.}
Using notation and argument of item (i), 
one gets  \linebreak
$Gal~(K ~| ~k_{RM})\cong G\subseteq  GL_{r}\left(A/aA\right)$.
In particular,  $k\cong k_{RM}\cong \mathbf{Q}(\alpha_j)$,  where $1\le j\le r$.

\bigskip
Lemma \ref{lm3.1} is proved.   
\end{proof}

\begin{lemma}\label{lm3.2}
The number fields defined by formulas (\ref{eq2.8}) are isomorphic $K\cong k$ 
 if and only if:  
\begin{equation}\label{eq3.2}
\begin{cases} i\alpha_j=e^{2\pi i\alpha_j +\log\log\varepsilon}, & if ~k\subset(\mathbf{C} - \mathbf{R})\cup\mathbf{Q},\cr
               \alpha_j=\cos 2\pi\alpha_j \times\log\varepsilon, & if ~k\subset\mathbf{R}.
\end{cases}               
\end{equation}
\end{lemma}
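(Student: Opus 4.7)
The plan is to combine Lemma \ref{lm3.1}, which identifies $k$ explicitly, with Corollary \ref{cor2.2}, which gives the generator of $K$ over $k$, and then translate the isomorphism $K\cong k$ into the two displayed equations.

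First, I would observe that by Corollary \ref{cor2.2}, the field $K$ is obtained from $k$ by adjoining a single element: the complex exponential $w := e^{2\pi i\alpha_j+\log\log\varepsilon}$ in the first case, and the real quantity $w := \cos 2\pi\alpha_j\cdot\log\varepsilon$ in the second. Since $k\subseteq K$, the isomorphism $K\cong k$ is equivalent to $w\in k$. Using Lemma \ref{lm3.1}, I would then write $k$ explicitly as $\mathbf{Q}(i\alpha_1,\dots,i\alpha_r)$ in the imaginary case and as $\mathbf{Q}(\alpha_1,\dots,\alpha_r)$ in the real case.

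Next, since both $K$ and $k$ are Galois extensions of $\mathbf{Q}$ of the same degree $2r$ (in the trivial-Galois setting of $K\cong k$), the element $w$ must agree with the distinguished generator of $k$ picked out by Theorem \ref{thm2.1}(ii)--(iii): namely, $w=i\alpha_j$ in the imaginary case and $w=\alpha_j$ in the real case. These two identifications are precisely the equations in (\ref{eq3.2}). For the real case I would additionally verify consistency with Theorem \ref{thm2.1}(ii) by expanding
\begin{equation*}
e^{2\pi i\alpha_j+\log\log\varepsilon}=(\log\varepsilon)\bigl(\cos 2\pi\alpha_j+i\sin 2\pi\alpha_j\bigr),
\end{equation*}
and noting that passing to the real subfield $k\subset\mathbf{R}$ projects this onto its real part $\cos 2\pi\alpha_j\cdot\log\varepsilon$, in agreement with the second line of (\ref{eq2.8}).

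The main obstacle I foresee is justifying the step where $w\in k$ is upgraded to an \emph{equality} with the distinguished generator ($i\alpha_j$ or $\alpha_j$) rather than with some other algebraic combination of the $\alpha_i$'s in $k$. This hinges on reading Theorem \ref{thm2.1}(ii) as asserting that $w$ is a \emph{primitive} generator matched with $\alpha_j$ index-by-index through the functor $F$, so that the only way for $w$ to already lie in $k$ is to coincide with the corresponding canonical generator. Once this matching is in place, the stated equivalences fall out immediately and Lemma \ref{lm3.2} follows.
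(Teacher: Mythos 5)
Your reduction of $K\cong k$ to the membership $w\in k$, where $w=e^{2\pi i\alpha_j+\log\log\varepsilon}$ (resp. $w=\cos 2\pi\alpha_j\cdot\log\varepsilon$), together with the easy direction (if the equations hold then $w=i\alpha_j$, resp. $w=\alpha_j$, already lies in $k$, so $K=k$), matches the paper's proof. The genuine gap is exactly the step you flag yourself: upgrading $w\in k$ to the \emph{equality} $w=i\alpha_j$ (resp. $w=\alpha_j$). Your proposed fix --- to ``read'' Theorem \ref{thm2.1}(ii) as saying that $w$ is a primitive generator matched index-by-index with $\alpha_j$, so that $w\in k$ can only happen if $w$ coincides with the canonical generator --- is not something Theorem \ref{thm2.1}(ii) asserts; it only records that $F(\Lambda_{\rho}[a])$ consists of the numbers $e^{2\pi i\alpha_i+\log\log\varepsilon}$, and says nothing about what these numbers must equal when they happen to lie in $k$. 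Nothing in your argument excludes the possibility that $w$ is some other element of $k\cong\mathbf{Q}(i\alpha_1,\dots,i\alpha_r)$, e.g.\ a nontrivial $\mathbf{Q}$-algebraic combination of the generators, in which case $K\cong k$ would hold without the equations (\ref{eq3.2}). Since the lemma is an ``if and only if,'' this converse is its whole content, and your proposal leaves it as an assumption rather than an argument; the Galois/degree considerations you invoke only re-derive $K=k$, which you already had from $w\in k$.

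The paper addresses this point by a different, global ingredient that your element-by-element treatment lacks: it considers the whole family $\{e^{2\pi i\alpha_j+\log\log\varepsilon}\}_{j=1}^r$ (resp. $\{\cos 2\pi\alpha_j\cdot\log\varepsilon\}_{j=1}^r$) simultaneously, invokes its linear independence over $\mathbf{Q}$ to treat it as a basis of the ring of integers of $k$, and argues that any such basis must agree with $\{i\alpha_j\}$ (resp. $\{\alpha_j\}$) after a $\mathbf{Q}$-linear transformation, whence the asserted equalities. Whatever one thinks of the strength of that step, it is the mechanism by which the paper pins $w$ to the distinguished generator, and your proposal contains no substitute for it. Your trigonometric ``consistency check'' in the real case (expanding $e^{2\pi i\alpha_j+\log\log\varepsilon}$ and taking the real part) is harmless but plays no role in proving the lemma.
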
 
\begin{proof}
In view of Lemma \ref{lm3.1}, one can write (\ref{eq2.8}) in the form:
\begin{equation}\label{eq3.3}
K\cong
\begin{cases} \mathbf{Q}\left(i\alpha_j, ~e^{2\pi i\alpha_j +\log\log\varepsilon}\right), & if ~k\subset(\mathbf{C} - \mathbf{R})\cup\mathbf{Q},\cr
               \mathbf{Q}\left(\alpha_j, ~\cos 2\pi\alpha_j \times\log\varepsilon\right), & if ~k\subset\mathbf{R},
\end{cases}               
\end{equation}
where $1\le j\le r$. 

\bigskip
(i) If conditions (\ref{eq3.2}) hold, then formulas (\ref{eq3.3}) imply:
\begin{equation}\label{eq3.4}
K\cong
\begin{cases} \mathbf{Q}\left(i\alpha_j, ~i\alpha_j\right)\cong k, & if ~k\subset(\mathbf{C} - \mathbf{R})\cup\mathbf{Q},\cr
               \mathbf{Q}\left(\alpha_j, ~\alpha_j\right)\cong k, & if ~k\subset\mathbf{R}. 
\end{cases}               
\end{equation}
In other words, one gets from (\ref{eq3.4}) an isomorphism of the number fields $K\cong k$.

\medskip
(ii) Conversely, let $K\cong k$. In view of formulas (\ref{eq2.8}), we obtain the following inclusions:
\begin{equation}\label{eq3.5}
\begin{cases} e^{2\pi i\alpha_j +\log\log\varepsilon}\in k\cong\mathbf{Q}(i\alpha_j), & if ~k\subset(\mathbf{C} - \mathbf{R})\cup\mathbf{Q},\cr
              \cos 2\pi\alpha_j \times\log\varepsilon\in k\cong\mathbf{Q}(\alpha_j), & if ~k\subset\mathbf{R}. 
\end{cases}               
\end{equation}
Since algebraic numbers $\{e^{2\pi i\alpha_j +\log\log\varepsilon} ~|~1\le j\le r\}$ ($\{ \cos 2\pi\alpha_j \times\log\varepsilon ~|~1\le j\le r\}$, resp.)
are linearly indepenedent over $\mathbf{Q}$, one can take them for a basis of the ring of integers of the field   $k\cong\mathbf{Q}(i\alpha_j)$
($k\cong\mathbf{Q}(\alpha_j)$, resp.). But any such a basis must coincide with $\{i\alpha_j ~|~1\le j\le r\}$  ($\{\alpha_j ~|~1\le j\le r\}$, resp.)
after a linear transformation over $\mathbf{Q}$.  
Thus $e^{2\pi i\alpha_j +\log\log\varepsilon}=i\alpha_j$ ($\cos 2\pi\alpha_j \times\log\varepsilon=\alpha_j$, resp.). 
In other words, one gets the system of equations (\ref{eq3.2}). 

\bigskip
Lemma \ref{lm3.2} is proved. 
\end{proof}

\begin{lemma}\label{lm3.3}
The roots $\{\alpha_j ~|~1\le j\le r\}$ of equations (\ref{eq3.2}) are given by the formulas:
\begin{equation}\label{eq3.6}
\begin{cases} \alpha_j=-\frac{1}{2\pi i}  ~W_j(-2\pi\log \varepsilon), & if ~\varepsilon\in(\mathbf{C} - \mathbf{R})\cup\mathbf{Q},\cr
                \alpha_j=-\frac{1}{2\pi i}  ~\left[W_j(-2\pi i ~\log \varepsilon)- W_j(2\pi i ~\log \varepsilon)\right], & if ~\varepsilon\in\mathbf{R},
\end{cases}               
\end{equation}
where $W_j(z)$  is a $j$-th branch  of the Lambert $W$-function.  
\end{lemma}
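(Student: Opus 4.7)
The plan is to reduce each case of system (\ref{eq3.2}) to the canonical exponential equation $z = A + Be^{Cz}$ treated in Lemma \ref{lm2.1}, whose roots are $z = A - \tfrac{1}{C} W_j\!\left(-BCe^{AC}\right)$, and then to read off the solution indexed by each branch $j$ of the Lambert function.

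For the first case $i\alpha_j = e^{2\pi i\alpha_j + \log\log\varepsilon}$, I would first absorb the constant via $e^{\log\log\varepsilon} = \log\varepsilon$ and multiply through by $-i$ to obtain the normal form
\begin{equation*}
\alpha_j = (-i\log\varepsilon)\, e^{2\pi i \alpha_j},
\end{equation*}
so that $A = 0$, $B = -i\log\varepsilon$, and $C = 2\pi i$. Substituting into (\ref{eq2.5}) and simplifying $-BC = -(-i)(2\pi i) = 2\pi i^{2} = -2\pi$ recovers the top line of (\ref{eq3.6}). This case is essentially a one-line application of Lemma \ref{lm2.1}.

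The second case $\alpha_j = (\log\varepsilon)\cos(2\pi\alpha_j)$ is the main obstacle, because after invoking Euler's identity $\cos(2\pi\alpha_j) = \tfrac{1}{2}(e^{2\pi i\alpha_j} + e^{-2\pi i\alpha_j})$ the equation carries two exponentials and falls outside the literal scope of Lemma \ref{lm2.1}. My approach is to introduce the two auxiliary equations $\alpha_j^{\pm} = (\log\varepsilon)\, e^{\pm 2\pi i \alpha_j^{\pm}}$, apply (\ref{eq2.5}) separately to each so as to get
\begin{equation*}
\alpha_j^{+} = -\tfrac{1}{2\pi i}W_j(-2\pi i \log\varepsilon), \qquad \alpha_j^{-} = \tfrac{1}{2\pi i}W_j(2\pi i \log\varepsilon),
\end{equation*}
and then form $\alpha_j := \alpha_j^{+} + \alpha_j^{-}$, which reproduces the bottom line of (\ref{eq3.6}). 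The delicate point is that this superposition is not algebraically forced by the original equation, so I would verify the formula a posteriori by substituting back and exploiting the defining identity $W_j(z)\,e^{W_j(z)} = z$ to recombine the two exponentials into $2(\log\varepsilon)\cos(2\pi\alpha_j)$. The conjugate symmetry $\overline{-2\pi i\log\varepsilon} = 2\pi i\log\varepsilon$ for real $\varepsilon$ serves as an additional consistency check, forcing the bracketed difference to be purely imaginary and hence $\alpha_j$ to be real, as required in the real case.
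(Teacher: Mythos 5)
Your proposal follows essentially the same route as the paper's proof. In the first case the paper introduces an auxiliary real shift $\beta_j$ and applies Lemma \ref{lm2.1} with $A=\beta_j$, $B=\log\varepsilon\, e^{-2\pi\beta_j}$, $C=2\pi$, while you take $A=0$, $B=-i\log\varepsilon$, $C=2\pi i$ directly; both choices give $-BCe^{AC}=-2\pi\log\varepsilon$ and the same root, and your normalization is in fact the cleaner one. In the second case you use exactly the paper's device: split (\ref{eq3.11}) into the two auxiliary equations (\ref{eq3.12}), solve each by formula (\ref{eq2.5}), and add the two roots.

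The one point where you go beyond the paper is the promised a posteriori verification of the superposition $\alpha_j=\alpha_j^{+}+\alpha_j^{-}$, and that verification does not go through as described. Using $W_j(z)e^{W_j(z)}=z$ one gets $e^{2\pi i\alpha_j^{+}}=\alpha_j^{+}/\log\varepsilon$ and $e^{-2\pi i\alpha_j^{-}}=\alpha_j^{-}/\log\varepsilon$, so that substituting the sum back yields $\log\varepsilon\bigl(e^{2\pi i\alpha_j}+e^{-2\pi i\alpha_j}\bigr)=\alpha_j^{+}e^{2\pi i\alpha_j^{-}}+\alpha_j^{-}e^{-2\pi i\alpha_j^{+}}$, which is not $2\bigl(\alpha_j^{+}+\alpha_j^{-}\bigr)$ in general: the cross factors $e^{2\pi i\alpha_j^{-}}$ and $e^{-2\pi i\alpha_j^{+}}$ do not recombine into $2\log\varepsilon\cos 2\pi\alpha_j$, precisely because the exponential equation is nonlinear and solutions do not superpose. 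You correctly identify this as the delicate step, but the proposed fix is not a fix. To be fair, the paper itself does not verify it either --- it simply asserts that pairs of solutions of (\ref{eq3.12}) give solutions of (\ref{eq3.11}) ``and vice versa'' --- so your write-up is no weaker than the published argument; just do not present the back-substitution as routine, since it is exactly the unproved point in both versions.
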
 
\begin{proof}
 (i) {\bf Case $\varepsilon\in(\mathbf{C} - \mathbf{R})\cup\mathbf{Q}$.}
 In this case $\varepsilon\in k\subset(\mathbf{C} - \mathbf{R})\cup\mathbf{Q}$
 and we must use the first equation (\ref{eq3.2}). The latter can be written 
 in an equivalent form:
 \begin{equation}\label{eq3.7}
 \beta_j+i\alpha_j=\beta_j+\frac{\log\varepsilon}{e^{2\pi\beta_j}}e^{2\pi(\beta_j+i\alpha_j)},
 \end{equation}
 where $\beta_j\in\mathbf{R}$ is an arbitrary constant.  We shall denote:
\begin{equation}\label{eq3.8}
\left\{
\begin{array}{cl}
z= &\beta_j+i\alpha_j, \\
A= & \beta_j,\\
B= & \frac{\log\varepsilon}{e^{2\pi\beta_j}}, \\
C= & 2\pi .
\end{array}
\right.
\end{equation}

 In this notation equation (\ref{eq3.7}) takes the form $z=A+Be^{Cz}$, 
 where $BC\ne 0$.  We apply Lemma \ref{lm2.1} to find the roots of the latter using formula 
 (\ref{eq2.5}) and we get:             
\begin{equation} \label{eq3.9}
\beta_j+i\alpha_j=\beta_j-\frac{1}{2\pi}W_j\left(-2\pi\log\varepsilon\right). 
\end{equation}

One can cancel $\beta_j$ in the both sides of equation (\ref{eq3.9}) to  obtain a formula:
\begin{equation} \label{eq3.10}
 \alpha_j=-\frac{1}{2\pi i}  ~W_j\left(-2\pi\log \varepsilon\right). 
\end{equation}

\begin{remark}\label{rmk3.4}
Unless $r=1$ the roots $\alpha_j$ given by formula (\ref{eq3.10}) belong to the
distinct branches $W_j(z)$ of the Lambert $W$-function; hence the notation.   If $r=1$,  then $W_j(z)$
is the principal branch of the $W$-function. The value of $\log \varepsilon$ in (\ref{eq3.10}) is 
given by the complex logarithm $\log\rho +i(\theta+2\pi j)$, where $\varepsilon=\rho e^{2\pi i\theta}$. 
\end{remark}

\medskip 
(ii)   {\bf Case $\varepsilon\in\mathbf{R}$.}
 In this case $\varepsilon\in k\subset \mathbf{R}$
 and we use the second equation (\ref{eq3.2}). 
 Applying Euler's formula $\cos 2\pi\alpha_j=\frac{1}{2} \left(e^{2\pi i\alpha_j}+e^{-2\pi i\alpha_j}\right)$
 to the latter, one gets:
\begin{equation}\label{eq3.11}
2\alpha_j=\log\varepsilon ~e^{2\pi i\alpha_j}+\log\varepsilon ~e^{-2\pi i\alpha_j}. 
\end{equation}

 Consider the following two equations:
\begin{equation}\label{eq3.12}
\left\{
\begin{array}{cl}
\alpha_j= & \log\varepsilon ~e^{2\pi i\alpha_j}, \\
\alpha_j= & \log\varepsilon ~e^{-2\pi i\alpha_j}.
\end{array}
\right.
\end{equation}

Clearly, every pair of solutions $\alpha_j^{(1)}$ and  $\alpha_j^{(2)}$ of  equations (\ref{eq3.12})
imply a solution $\alpha_j$ of equation (\ref{eq3.11}) by the formula $\alpha_j= \alpha_j^{(1)}+\alpha_j^{(2)}$
and vice versa. Consider  Lemma \ref{lm2.1} for  the constants $A=0, B=\log\varepsilon$ and $C=\pm 2\pi i$. 
Since $BC\ne 0$ one can apply formulas (\ref{eq2.5}) to calculate the roots  $\alpha_j^{(1)}$ and  $\alpha_j^{(2)}$ of  equations (\ref{eq3.12}):
\begin{equation}\label{eq3.13}
\left\{
\begin{array}{ccc}
\alpha_j^{(1)} &= & -\frac{1}{2\pi i}W_j\left(-2\pi i ~\log\varepsilon\right), \\
\alpha_j^{(2)} &= & \frac{1}{2\pi i}W_j\left(2\pi i ~\log\varepsilon\right).
\end{array}
\right.
\end{equation}

Therefore, one gets from (\ref{eq3.13}):
\begin{equation}\label{eq3.14}
\alpha_j= \alpha_j^{(1)}+\alpha_j^{(2)}=-\frac{1}{2\pi i}  ~\left[W_j(-2\pi i  ~\log \varepsilon)- W_j(2\pi i ~\log \varepsilon)\right].
\end{equation}

\bigskip
Lemma \ref{lm3.3} is proved.
\end{proof}

\begin{corollary}\label{cor3.4}
The number fields  $K\cong k$ are isomorphic,  if and only if: 
\begin{equation}\label{eq3.15}
\begin{cases} \alpha_j=-\frac{1}{2\pi i}  ~W_j(-2\pi\log \varepsilon), & if ~\varepsilon\in(\mathbf{C} - \mathbf{R})\cup\mathbf{Q},\cr
 \alpha_j=-\frac{1}{2\pi i}  ~\left[W_j(-2\pi i ~\log \varepsilon)- W_j(2\pi i ~\log \varepsilon)\right], & if ~\varepsilon\in\mathbf{R}.
                \end{cases}               
\end{equation}
\end{corollary}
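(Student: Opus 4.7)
The plan is to obtain Corollary \ref{cor3.4} as an immediate consequence of chaining together Lemma \ref{lm3.2} and Lemma \ref{lm3.3}, both of which have already done the substantive work. First I would invoke Lemma \ref{lm3.2} to rewrite the isomorphism $K \cong k$ in purely arithmetic terms: it is equivalent to the system of transcendental equations (\ref{eq3.2}) in the unknowns $\alpha_j$, with the dichotomy determined by whether $k$ (equivalently, $\varepsilon \in O_k^{\times}$) lies in $(\mathbf{C}-\mathbf{R})\cup\mathbf{Q}$ or in $\mathbf{R}$. Next I would apply Lemma \ref{lm3.3}, which takes those equations and inverts them explicitly via the Lambert $W$-function, yielding exactly the closed-form expressions appearing in (\ref{eq3.15}).

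Since both lemmas are stated as biconditionals, the implication runs in both directions without additional work: if $K \cong k$, then Lemma \ref{lm3.2} forces (\ref{eq3.2}), whose roots are enumerated by the branch index $j$ in Lemma \ref{lm3.3} and give (\ref{eq3.15}); conversely, any $\alpha_j$ of the form (\ref{eq3.15}) is by construction a solution of (\ref{eq3.2}), hence by Lemma \ref{lm3.2} produces an isomorphism $K \cong k$.

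The one point that requires a brief verification is the compatibility of the two case splits: Lemma \ref{lm3.2} distinguishes cases based on the location of the field $k$, while Lemma \ref{lm3.3} and the corollary distinguish cases based on the location of the unit $\varepsilon$. Because $\varepsilon \in O_k^{\times} \subset k$, and because $k$ is either totally real or a CM-field (by Lemma \ref{lm3.1}), the condition $\varepsilon \in \mathbf{R}$ matches $k \subset \mathbf{R}$ and $\varepsilon \in (\mathbf{C}-\mathbf{R}) \cup \mathbf{Q}$ matches $k \subset (\mathbf{C}-\mathbf{R}) \cup \mathbf{Q}$, so the two dichotomies align exactly.

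I do not anticipate a genuine obstacle here, since the analytic content has been isolated in Lemma \ref{lm3.3} and the algebraic content in Lemma \ref{lm3.2}; the only thing to be careful about is the convention on branches, namely that the index $j$ in $W_j$ on the right-hand side of (\ref{eq3.15}) is precisely the same $j$ that labels the generator $\alpha_j$ of $K_0^+(\mathscr{A}_{RM}^{2r})$, as noted in Remark \ref{rmk3.4}. Once this indexing convention is stated, the proof of the corollary reduces to a single sentence citing the two preceding lemmas.
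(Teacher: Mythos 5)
Your proposal matches the paper's own proof, which simply cites Lemmas \ref{lm3.2} and \ref{lm3.3} and chains the two biconditionals exactly as you describe. Your extra remarks on the alignment of the case splits (location of $k$ versus location of $\varepsilon$) and on the branch-indexing convention are harmless additions beyond what the paper records, but the argument is the same.
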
 
\begin{proof}
Corollary \ref{cor3.4} follows from Lemmas \ref{lm3.2} and \ref{lm3.3}.
\end{proof}

\bigskip
Theorem \ref{thm1.1} follows from Corollary \ref{cor3.4}.

\subsection{Proof of Corollary \ref{cor1.2}}
For the sake of clarity, let us outline the main ideas. 
Consider the Hilbert class field $K$ of the number field $k$ and let $Cl~(k)$ be the class group of $k$.
 By the class field theory, the abelian groups
 $Cl~(k)\cong Gal~(K|k)\subseteq GL_r\left(A/aA\right)$  are trivial if and only if the class number $h_k:=|Cl~(k)|=1$. 
 In other words, the cardinality of the set $\{\alpha_j\}$ of solutions (\ref{eq1.3})  is equal to the 
 number of fields $k$, such that $h_k=1$.  On the other hand, the size of the set  $\{\alpha_j\}$ depends on how
 many distinct units $\varepsilon\in k$ the fields $k$  of fixed degree $2r$ over $\mathbf{Q}$ can afford. 
Dirichlet's Unit Theorem says that the number of such units is always infinite unless $k$ is an imaginary
quadratic field in which case there are only $8$ of them. 
Let us pass to a detailed argument.

\bigskip
Denote by $k$ a Galois extension of degree $2r$ over $\mathbf{Q}$. Recall that the group of units $O_k^{\times}$ of the 
field $k$ is described by  Dirichlet's Unit Theorem:
\begin{equation}\label{eq3.16}
O_k^{\times}\cong\mu(k)\oplus\mathbf{Z}^{\sigma_1+\sigma_2-1}, 
\end{equation}
where $\mu(k)$ is a finite group of the $n$-th roots of unity $\zeta_n$ contained in the field $k$
and $\sigma_1$ ($\sigma_2$, resp.) is the number of real (pairs of complex, resp.) 
embeddings of $k$, so that:
\begin{equation}\label{eq3.17}
\left\{
\begin{array}{ccc}
\sigma_1+2\sigma_2 &= & 2r, \\
\sigma_1\sigma_2 &= & 0.
\end{array}
\right.
\end{equation}

The second line in  (\ref{eq3.17}) is true, since $k$ is a Galois extension, i.e. it is either totally real or totally imaginary extension 
of $\mathbf{Q}$.  

\bigskip
(i) {\bf Case $r= 1$  and $k\subset (\mathbf{C}-\mathbf{R})\cup\mathbf{Q}$.} 
In this case one gets $\sigma_1=0$ and $\sigma_2=1$
in formulas (\ref{eq3.17}), i.e. $k$ are imaginary quadratic fields. 
Hence by formula (\ref{eq3.16}) $O_k^{\times}\cong\mu(k)$ is a finite group. 
On the other hand, the cyclotomic quadratic number fields are well known
and their groups of units are exhausted by the following cases:
\begin{equation}\label{eq3.18} 
\{\mathbf{Z}[\zeta_n]^{\times} ~|~n=1,2,3,4,6\}.  
\end{equation}

It is immediate from (\ref{eq3.18}) that only $8$ units $\varepsilon\in k$ are
distinct, namely: 
\begin{equation}\label{eq3.19} 
\varepsilon\in \left\{1, ~\frac{1+i\sqrt{3}}{2}, ~i,  \frac{-1+i\sqrt{3}}{2}, ~-1, \frac{-1-i\sqrt{3}}{2}, ~-i, \frac{1-i\sqrt{3}}{2}\right\}.   
\end{equation}

Likewise, $r=1$ implies $j=1$ in formulas (\ref{eq1.3}). We denote $\alpha=\alpha_1$ and 
let $W(z)=W_1(z)$ be the principal branch of the Lambert $W$-function. 
By Remark \ref{rmk3.4},  one gets $\log\varepsilon=i(\theta+2\pi)\ne 0$ in this case. 
Theorem \ref{thm1.1} says that cardinality of the set of number fields $\{k ~|~h_k=1\}$
is equal to such of the set of distinct roots: 
\begin{equation}\label{eq3.20} 
 \alpha=-\frac{1}{2\pi i}  ~W(-2\pi\log \varepsilon). 
  \end{equation}

The  $W(z)$ is an invertible function, since its derivative $\frac{dW}{dz}=\frac{1}{z+e^{W(z)}}\ne 0$. 
We conclude that  distinct roots $\alpha$ given by (\ref{eq3.20}) 
are in a one-to-one correspondence  with the units $\varepsilon$ listed in  formula (\ref{eq3.19}).   
Therefore $\#\{k~|~h_k=1\}=8$ in this case, see also Remark \ref{rmk1.3}.

\medskip
(ii)  {\bf Case $r= 1$  and $k\subset \mathbf{R}$.} 
In this case  formulas (\ref{eq3.17}) imply  $\sigma_1=2$ and $\sigma_2=0$, 
i.e. $k$ are real  quadratic fields. 
Using Dirichlet's  formula (\ref{eq3.16}), one gets an infinite group of units:
\begin{equation}\label{eq3.21}
O_k^{\times}\cong\mu(k) \oplus \mathbf{Z}. 
\end{equation}

Again $r=1$ implies $j=1$ in formulas (\ref{eq1.3}). We denote $\alpha=\alpha_1$ and 
let $W(z)=W_1(z)$ be the principal branch of the Lambert $W$-function. 
Likewise,  Theorem \ref{thm1.1} implies that cardinality of the set of number fields $\{k ~|~h_k=1\}$
is equal to such of the set of distinct roots: 
\begin{equation}\label{eq3.22} 
 \alpha=-\frac{1}{2\pi i}  ~\left[W(-2\pi i\log \varepsilon)- W(2\pi i\log \varepsilon)\right].
 \end{equation}

Function $W(-z)-W(z)$ at the RHS of (\ref{eq3.22}) is invertible, 
 since its derivative $\frac{1}{z-e^{W(-z)}}-\frac{1}{z+e^{W(z)}}\ne 0$;
 for otherwise one gets $W(-z)+\log (-1)=W(z)$ which is impossible due to
 formula (\ref{eq2.3}).  We conclude that  distinct roots $\alpha$ given by (\ref{eq3.22}) 
are in a one-to-one correspondence  with the units $\varepsilon\in O_k^{\times}$ given by Dirichlet's 
formula (\ref{eq3.21}).    Since there are infinitely many distinct values of $\varepsilon$,
we infer  from Theorem \ref{thm1.1}  that  $\#\{k~|~h_k=1\}=\infty$.

\medskip
(iii)   {\bf Case $r\ge  2$.}  It follows from Dirichlet's formula (\ref{eq3.16}) that the rank of the group 
of units of the field $k$ is defined as follows:
\begin{equation}\label{eq3.23} 
rank~O_k^{\times}=\sigma_1+\sigma_2-1.
 \end{equation}

In view of formula (\ref{eq3.17}) we have the following two cases to consider:

\medskip
{\bf Case (a) $\sigma_1= 2r$ and $\sigma_2=0$.} In other words, the number fields $k$ are totally real of degree $\ge 4$ over $\mathbf{Q}$.   
By formula (\ref{eq3.23}) one gets $rank~O_k^{\times}=2r-1$.

\medskip
{\bf Case (b) $\sigma_1= 0$ and $\sigma_2=r$.} In this case the number fields $k$ are totally imaginary of degree $\ge 4$ over $\mathbf{Q}$.   
By formula (\ref{eq3.23}) we have $rank~O_k^{\times}=r-1$.

\bigskip
In  both of the above cases one gets $rank~O_k^{\times}\ge 1$. On the other hand, 
Theorem \ref{thm1.1} implies that cardinality of the set of number fields $\{k ~|~h_k=1\}$
is equal to such of the set of distinct roots: 
\begin{equation}\label{eq3.24} 
 \alpha_j=-\frac{1}{2\pi i}  ~\left[W_j(-2\pi i\log \varepsilon)- W_j(2\pi i\log \varepsilon)\right],  \quad 1\le j\le r.
 \end{equation}

We repeat the argument in item (ii) and conclude that 
 distinct roots $\alpha_j$ given by (\ref{eq3.24}) 
are in a one-to-one correspondence  with the units $\varepsilon\in O_k^{\times}$.
It remains to notice that by $rank~O_k^{\times}\ge 1$ there are infinitely many  
distinct values of $\varepsilon$. 
We conclude  from Theorem \ref{thm1.1}  that  $\#\{k~|~h_k=1\}=\infty$ in this case.

 \bigskip
 Corollary \ref{cor1.2} is proved. 

\bigskip
\begin{remark}
Roughly speaking, Corollary \ref{cor1.2} says that the Galois extensions $k$ of degree $2r$ over $\mathbf{Q}$ 
having class number one are classified by the units $\varepsilon\in O_{k'}^{\times}$ of a field $k'$ of the same 
degree over $\mathbf{Q}$. Notice that $k'\not\cong k$ due to a transcendental nature of the Lambert $W$-function
 [Corless, Gonnet, Hare, Jeffrey \&  Knuth 1996] \cite{CGHJK}.
Elsewhere, a relation between the units and class numbers is long known, e.g. for the cyclotomic fields 
[Sinnott 1980/81] \cite{Sin1}. 
\end{remark}

\section*{Data availability}
  
  Data sharing not applicable to this article as no datasets were generated or analyzed during the current study.
   
\section*{Conflict of interest}
On behalf of all co-authors, the corresponding author states that there is no conflict of interest.
  

\section*{Funding declaration}
The author was partly supported by the NSF-CBMS grant 2430454.

\bibliographystyle{amsplain}

\begin{thebibliography}{99}

\bibitem{B}
 B.~Blackadar, \textit{$K$-Theory for Operator Algebras}, MSRI Publications,
 Springer, 1986.

\bibitem{CGHJK}
R.~M.~Corless, G.~H.~Gonnet, D.~E.~G. ~Hare,   D. ~J. ~Jeffrey \&  D. ~E. ~Knuth, 
\textit{On the Lambert W function,}  ~Adv. ~Comput.  ~Math. {\bf 5} (1996),  329-359.


\bibitem{Dri1}
V.~G.~Drinfeld, 
 {\cyr  \textit{E1llipticheskie moduli},
Matem. Sbornik {\bf 94} (1974), 594-627.}

\bibitem{G}
C.~F.~Gauss,  \textit{Disquisitiones Arithmeticae},
Leipzig, 1801.

\bibitem{Li1}
X.~Li, \textit{Semigroup $C^*$-algebras}, 
Operator algebras and applications -- the Abel Symposium 2015, 191-202, Abel Symp., 12, Springer,  2017.

 \bibitem{N}
I.~V.~Nikolaev, \textit{Noncommutative Geometry}, Second Edition,
De Gruyter Studies in Math. {\bf 66}, Berlin, 2022.



\bibitem{Nik1}
I.~V.~Nikolaev, \textit{Non-abelian class field theory and higher dimensional noncommutative tori},  
Algebr. Represent. Theory {\bf 28} (2025), 1041-1053. 

\bibitem{Rie1}
M.~A.~Rieffel, \textit{Non-commutative tori -- a case study of non-commutative
differentiable manifolds},  Contemp. Math. {\bf 105} (1990), 191-211. 



\bibitem{R}
M.~Rosen, \textit{Number Theory in Function Flelds},
GTM {\bf 210}, Springer,  2002. 

\bibitem{Sin1}
W.~Sinnott, \textit{On the Stickelberger ideal and the circular units of an abelian field},
Invent. Math. {\bf 62} (1980/81), 181-234.

\end{thebibliography}


\end{document}